\newcounter{count}
\numberwithin{count}{section}
\newtheorem{Lemma}[count]{Lemma}
\newtheorem{Theorem}[count]{Theorem}
\newtheorem{Statement}[count]{Statement}
\begin{document}

\author[T. H.~Nguyen]{Thu Hien Nguyen}

\address{Partially supported by the Akhiezer Foundation. Department of Mathematics \& Computer Sciences, V. N. Karazin Kharkiv National University,
4 Svobody Sq., Kharkiv, 61022, Ukraine}
\email{nguyen.hisha@karazin.ua}

\author[A.~Vishnyakova]{Anna Vishnyakova}
\address{Department of Mathematics \& Computer Sciences, V. N. Karazin Kharkiv National University,
4 Svobody Sq., Kharkiv, 61022, Ukraine}
\email{anna.m.vishnyakova@univer.kharkov.ua}

\title[Entire functions of the Laguerre-P\'olya class]
{On the closest to zero roots and the second quotients of Taylor coefficients 
of entire functions from the Laguerre-P\'olya I class }

\begin{abstract}

For an entire function $f(z) = \sum_{k=0}^\infty a_k z^k, a_k>0,$  we show that if $f$
belongs to the Laguerre-P\'olya class,  and the quotients   $q_k := \frac{a_{k-1}^2}{a_{k-2}a_k}, 
k=2, 3, \ldots $ satisfy the condition $q_2 \leq q_3,$ then $f$ has at least one zero in the segment 
$[-\frac{a_1}{a_2},0].$  We also give necessary conditions and sufficient conditions of the 
existence of  such a zero in terms of the quotients $q_k$ for $k=2,3, 4.$

\end{abstract}

\keywords {Laguerre-P\'olya class; entire functions of order zero; real-rooted 
polynomials; multiplier sequences; complex zero decreasing sequences}

\subjclass{30C15; 30D15; 30D35; 26C10}

\maketitle

\section{Introduction}

The zero distribution of  entire functions, its sections and tails have been 
studied by  many authors, see, for example, the remarkable
survey of the topic in \cite{iv}.  In this paper we investigate new 
conditions under which some special entire functions have only real zeros. 
First, we give the definition of the well-known Laguerre-P\'olya class.

{\bf Definition 1}.  A real entire function $f$ is said to be in the {\it
Laguerre-P\'olya class}, written $f \in \mathcal{L-P}$, if it can
be expressed in the form
\begin{equation}
\label{lpc}
 f(x) = c x^n e^{-\alpha x^2+\beta x}\prod_{k=1}^\infty
\left(1-\frac {x}{x_k} \right)e^{xx_k^{-1}},
\end{equation}
where $c, \alpha, \beta, x_k \in  \mathbb{R}$, $x_k\ne 0$,  $\alpha \ge 0$,
$n$ is a nonnegative integer and $\sum_{k=1}^\infty x_k^{-2} <
\infty$. As usual, the product on the right-hand side can be
finite or empty (in the latter case the product equals 1).

This class is essential in the theory of entire functions due to the fact that  
the polynomials with only real zeros converge locally uniformly to these and 
only these functions. The following  prominent theorem 
states an even  stronger fact. 

{\bf Theorem A} (E.Laguerre and G.P\'{o}lya, see, for example,
\cite[p. 42-46]{HW}). {\it   

(i) Let $(P_n)_{n=1}^{\infty},\  P_n(0)=1, $ be a sequence
of complex polynomials having only real zeros which  converges uniformly in
the circle $|z|\le A, A > 0.$ Then this sequence converges
locally uniformly to an entire function
 from the $\mathcal{L-P}$ class.
 
(ii) For any $f \in \mathcal{L-P}$ there is a
sequence of complex polynomials with only real zeros which
converges locally uniformly to $f$.}

In this paper, we study the functions from the important subclass of the class $\mathcal{L-P}.$

{\bf Definition 2}.  A real entire function $f$ is said to be in the {\it Laguerre-
P\'olya class of type I}, 
written $f \in \mathcal{L-P} I$, if it can
be expressed in the following form  
\begin{equation}  \label{lpc1}
 f(x) = c x^n e^{\beta x}\prod_{k=1}^\infty
\left(1+\frac {x}{x_k} \right),
\end{equation}
where $c \in  \mathbb{R},  \beta \geq 0, x_k >0 $, 
$n$ is a nonnegative integer,  and $\sum_{k=1}^\infty x_k^{-1} <
\infty$. 

The famous theorem by E.~Laguerre and G.~P\'olya   (see, for example, 
\cite[chapter VIII, \S 3]{lev}) states that the polynomials with only real 
nonpositive zeros converge locally uniformly to the 
function from the class $\mathcal{L-P}I.$ The following theorem 
states a  stronger fact. 

{\bf Theorem B} (E.Laguerre and G.P\'{o}lya, see, for example,
\cite[chapter VIII, \S 3]{lev}). {\it 
 
(i) Let $(P_n)_{n=1}^{\infty},\  P_n(0)=1, $ be a sequence
of complex polynomials having only real negative zeros which  
converges uniformly in the circle  $|z|\le A, A > 0.$ Then this 
sequence converges locally uniformly to an entire function
 from the class $\mathcal{L-P}I.$
 
(ii) For any $f \in \mathcal{L-P}I$ there is a
sequence of complex polynomials with only real nonpositive 
zeros which  converges locally uniformly to $f$.}

For various properties and characterizations of the
Laguerre-P\'olya   class and the Laguerre-P\'olya class of type I,  see 
\cite[p. 100]{pol}, \cite{polsch}  or  \cite[Kapitel II]{O}.

Note that for a real entire function (not identically zero) of order less than $2$ having 
only real zeros is  equivalent to belonging to the Laguerre-P\'olya class. The situation is 
different when an entire function is of order $2$. For example, the function 
$f_1(x)= e^{- x^2}$ belongs to the Laguerre-P\'olya class, but the function 
$f_2(x)= e^{x^2}$ does not.

Let  $f(z) = \sum_{k=0}^\infty a_k z^k$  be an entire function with positive 
coefficients.  We define the quotients $p_n$ and $q_n$:

\begin{eqnarray}
\label{qqq} &  p_n=p_n (f):=\frac {a_{n-1}}{a_n},\ n\geq 1;
\\
\nonumber &
q_n=q_n(f):=\frac {p_n}{p_{n-1}}=\frac {a_{n-1}^2}{a_{n-2}a_n},\
n\geq 2.
\end{eqnarray}
The following formulas can be verified by straightforward calculation.
\begin{eqnarray}
\label{defq}
& a_n=\frac {a_0}{p_1 p_2 \ldots p_n},\ n\geq 1\ ; \\
\nonumber & a_n=\frac
{a_1}{q_2^{n-1} q_3^{n-2} \ldots q_{n-1}^2 q_n} \left(
\frac{a_1}{a_0} \right) ^{n-1},\ n\geq 2.
\end{eqnarray}

It is not trivial to understand whether a given entire function has only real zeros. 
In 1926, J. I. Hutchinson found the following 
sufficient condition for an entire function with positive coefficients to 
have only real zeros.

{\bf Theorem C} (J. I. Hutchinson, \cite{hut}). { \it Let $f(z)=
\sum_{k=0}^\infty a_k z^k$, $a_k > 0$ for all $k$. 
Then $q_n(f)\geq 4$, for all $n\geq 2,$  
if and only if the following two conditions are fulfilled:\\
(i) The zeros of $f(z)$ are all real, simple and negative, and \\
(ii) the zeros of any polynomial $\sum_{k=m}^n a_kz^k$, $m < n,$  formed 
by taking any number 
of consecutive terms of $f(z) $, are all real and non-positive.}

For some extensions of Hutchinson's results see,
for example, \cite[\S 4]{cc1}. 

The following function has a prominent role.

{\bf Definition 3.} The entire function $g_a(z) =\sum _{j=0}^{\infty} z^j a^{-j^2}$, $a>1,$
is called the partial theta-function.

Note that its second quotients are constant: $q_n(g_a) = a^2,$ for every $n \in \mathbb{N}.$
The  survey \cite{War} by S.O.~Warnaar contains the history of
investigation of the partial theta-function and its interesting properties.

{\bf Theorem D} (O. Katkova, T. Lobova, A. Vishnyakova, \cite{klv}).  {\it There exists a 
constant $q_\infty $
$(q_\infty\approx 3{.}23363666 \ldots) $ such that:
\begin{enumerate}
\item
$g_a(z) \in \mathcal{L-P} \Leftrightarrow \ a^2\geq q_\infty ;$
\item
$g_a(z) \in \mathcal{L-P} \Leftrightarrow \ $  there exists $x_0 \in (- a^3, -a)$ 
such that $ \  g_a(x_0) \leq 0;$
\item
for a given $n\geq 2$ we have $S_{n}(z,g_a) \in \mathcal{L-P}$ $ \ \Leftrightarrow \ $
there exists $x_n \in (- a^3, -a)$ such that $ \ S_n(x_n,g_a) \leq 0;$
\item
$ 4 = c_2 > c_4 > c_6 > \ldots $  and    $\lim_{n\to\infty} c_{2n} = q_\infty ;$
\item
$ 3= c_3 < c_5 < c_7 < \ldots $  and    $\lim_{n\to\infty} c_{2n+1} = q_\infty.$
\end{enumerate}}
There is a series of works by V.P. Kostov dedicated to the interesting properties of zeros 
of  the partial theta-function and its  derivative (see \cite{kos0}, \cite{kos1}, \cite{kos2}, 
\cite{kos3}, \cite{kos03}, \cite{kos04}, \cite{kos4}, \cite{kos5} and \cite{kos5.0}). 

A wonderful paper \cite{kosshap} among the other results explains the role 
of the constant $q_\infty $  in the study of the set of entire functions with positive 
coefficients having all Taylor truncations with only real zeros. 

Let us consider the entire function $f(z) = e^z = \sum_{k=0}^\infty \frac{z^k}{k!} = 
1 + z + \frac{z^2}{2!} + \frac{z^3}{3!} + \frac{z^4}{4!} + \frac{z^5}{5!} + \ldots,$ 
which belongs to the Laguerre-P\'olya class of type I. We can observe that its second quotients are
$q_k(f) = \frac{a_{k-1}^2}{a_{k-2}a_k} = \frac{k}{k-1}, k \geq 2.$

The following statement is the analogue (for entire functions) of the Newton inequalities which 
are necessary conditions for real polynomials with positive coefficients to have only real zeros
(or, equivalently, to belong to the Laguerre-P\'olya class). This fact is well-known to experts, but 
it is a kind of folklore: it is easier to give the proof than to find an appropriate reference. 

\begin{Statement}
\label{st1}
Let $f(z)=\sum_{k=0}^\infty a_k z^k$, $a_k > 0$ for all $k,$ be an entire function 
from the Laguerre-P\'olya class of type I. Then $q_n(f) \geq \frac{n}{n-1}$, for all 
$n \geq 2.$ Moreover, if there exists $m = 2, 3, \ldots,$ such that $q_m(f) = \frac{m}{m-1},$ 
then $f(z) = ce^{\alpha z}, c>0, \alpha>0.$  
\end{Statement}

The following statement is the analogue of Lemma 2.1 from the work \cite{ngthv2}. For 
the reader's convenience, we further provide the proof of this lemma.

\begin{Lemma}
\label{lem0}
If $f(z) = \sum_{k=0}^\infty a_kz^k, a_k>0,$ belongs to $\mathcal{L-P}I,$ 
then $q_3(q_2 - 4) + 3 \geq 0.$
In particular, if $q_3 \geq q_2,$ then $q_2 \geq 3.$
\end{Lemma}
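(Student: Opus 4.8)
The plan is to exploit the product representation of $f$ by passing to the logarithmic derivative, which turns the infinite product into a sum and brings in the power sums of the reciprocal zeros. Since $a_0>0$ we have $n=0$ in \eqref{lpc1}, and since $q_2,q_3$ are invariant under $f\mapsto cf$ I may normalize $c=1$ and write $f(x)=e^{\beta x}\prod_k\left(1+\frac{x}{x_k}\right)$ with $\beta\ge 0$, $x_k>0$, $\sum_k x_k^{-1}<\infty$. Setting $L(x):=\log f(x)=\beta x+\sum_k\log\left(1+\frac{x}{x_k}\right)$, I would record the three power sums
\[
m_1:=L'(0)=\beta+\sum_k x_k^{-1},\qquad m_2:=-L''(0)=\sum_k x_k^{-2},\qquad m_3:=\tfrac12 L'''(0)=\sum_k x_k^{-3}.
\]
All three series converge: $\sum_k x_k^{-1}<\infty$ forces $x_k^{-1}\to 0$, so $x_k^{-2},x_k^{-3}$ are eventually dominated by $x_k^{-1}$. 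Differentiating $f=e^{L}$ at the origin then yields the first four Taylor coefficients as explicit polynomials in the $m_j$:
\[
a_0=1,\qquad a_1=m_1,\qquad a_2=\tfrac12(m_1^2-m_2),\qquad a_3=\tfrac16(m_1^3-3m_1m_2+2m_3).
\]

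Next I would clear denominators in the target quantity. Writing $q_3(q_2-4)+3=q_2q_3-4q_3+3$ and using $q_2q_3=\frac{a_1a_2}{a_0a_3}$, this equals $\frac{a_1^2a_2-4a_0a_2^2+3a_0a_1a_3}{a_0a_1a_3}$, whose denominator is positive. Substituting the expressions above, the numerator should collapse dramatically: the $m_1^4$ terms cancel, the $m_1^2m_2$ terms cancel, and one is left with
\[
a_1^2a_2-4a_0a_2^2+3a_0a_1a_3=m_1m_3-m_2^2.
\]
This cancellation is the computational heart of the argument and the step I would verify most carefully. Once it is in place it remains only to prove $m_1m_3\ge m_2^2$. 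Since $\beta\ge 0$ only increases $m_1$, it suffices to show $\left(\sum_k x_k^{-1}\right)\left(\sum_k x_k^{-3}\right)\ge\left(\sum_k x_k^{-2}\right)^2$, which is exactly the Cauchy--Schwarz inequality applied to the sequences $\left(x_k^{-1/2}\right)$ and $\left(x_k^{-3/2}\right)$. Hence $q_3(q_2-4)+3\ge 0$.

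For the final assertion, suppose $q_3\ge q_2$. If $q_2\ge 4$ there is nothing to prove, so assume $q_2<4$, whence $q_2-4<0$. Multiplying $q_3\ge q_2>0$ by the negative factor $q_2-4$ reverses the inequality, giving $q_2(q_2-4)\ge q_3(q_2-4)\ge -3$, that is $(q_2-1)(q_2-3)\ge 0$. By Statement \ref{st1} we have $q_2\ge 2>1$, so the factor $q_2-1$ is positive and we conclude $q_2-3\ge 0$, i.e.\ $q_2\ge 3$. The only genuine obstacle is the algebraic identity producing $m_1m_3-m_2^2$; everything else is either normalization, a convergence remark, or the one-line Cauchy--Schwarz estimate.
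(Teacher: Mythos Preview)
Your argument is correct and is essentially the paper's own proof: both reduce the inequality $q_3(q_2-4)+3\ge 0$ to the Cauchy--Schwarz estimate $\bigl(\sum x_k^{-1}\bigr)\bigl(\sum x_k^{-3}\bigr)\ge\bigl(\sum x_k^{-2}\bigr)^2$ for the reciprocal zeros, and both handle the ``in particular'' clause by the same quadratic $q_2(q_2-4)+3\ge 0$. The only cosmetic difference is bookkeeping---the paper writes the coefficients via Vieta's elementary symmetric functions $\sigma_j$ and then passes to power sums, whereas you go through the logarithmic derivative and work with the power sums $m_j$ directly; your treatment has the minor advantage of cleanly absorbing the exponential factor $e^{\beta x}$ into $m_1$.
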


In \cite{klv1}, some  entire functions  with a convergent sequence of second quotients 
of coefficients  are investigated. The main question of \cite{klv1} is whether a function 
and its Taylor sections belong to the Laguerre-P\'olya class. In \cite{BohVish} and 
\cite{Boh}, some important special functions with increasing sequence of second 
quotients of Taylor coefficients  are studied. 

In \cite{ngthv1} and \cite{ngthv2}, we have found the conditions for entire 
functions of order zero with monotonic second quotients to belong to the 
Laguerre-P\'olya class.

{\bf Theorem  E} (T. H. Nguyen, A. Vishnyakova, \cite{ngthv1}).
{\it Let $f(z)=\sum_{k=0}^\infty a_k z^k $, $a_k > 0$ for all $k$, be an 
entire function.  Suppose that $q_n(f)$ are decreasing in $n$, i.e.  $q_2 \geq q_3 \geq 
q_4 \geq \ldots, $  and  $\lim\limits_{n\to \infty} q_n(f) = b \geq q_\infty$. 
Then all the zeros of $f$ are  real and negative, or in other words,  $f \in \mathcal{L-P}$.}

It is easy to see that, if only the estimation of $q_n(f)$ from below is given
and the assumption of monotonicity is omitted, then the constant $4$ in 
$q_n (f) \geq 4 $ is the smallest possible to conclude that $f \in \mathcal{L-P}$. 

We have also investigated the case when $q_n(f)$ are increasing in $n$ and have 
obtained the necessary condition.

{\bf Theorem F} (T. H. Nguyen, A. Vishnyakova, \cite{ngthv2}).
{\it Let $f(z)=\sum_{k=0}^\infty a_k z^k $, $a_k > 0$ for all $k,$  be an 
entire function.  Suppose that the quotients $q_n(f)$ are increasing in $n$, 
and  $\lim\limits_{n\to \infty} q_n(f) = c < q_\infty$. 
Then the function $f$ does not belong to the  Laguerre-P\'olya class.}

The first author studied the  function $f_a(z) = \sum_{k=0}^\infty \frac{z^k}{(a^k+1)(a^{k-1}+1)
\cdot \ldots \cdot (a+1)},$ $a>1,$  which is relative to the partial theta-function and the Euler function 
and found the conditions for it to belong to the Laguerre-Pol\'ya class (see \cite{ngth1}).

It turns out that for many important entire functions with positive
coefficients $f(z)=\sum_{k=0}^\infty a_k z^k $ (for example, partial theta-function
from \cite{klv}, functions from \cite{BohVish} and \cite{Boh}, function 
$f_a(z) = \sum_{k=0}^\infty \frac{z^k}{(a^k+1)(a^{k-1}+1)
\cdot \ldots \cdot (a+1)},$ $a>1,$ and others) the following two conditions
are equivalent: 

(i) $f$ belongs to the Laguerre-Pol\'ya class of type I,  

and 

(ii) there exists $z_0 \in [-\frac{a_1}{a_2},0]$ such that $f(z_0) \leq 0.$

The following theorem is a new necessary condition for an entire function to belong to the 
Laguerre-Pol\'ya class of type I in terms of the closest to zero roots.

\begin{Theorem}
\label{th:mthm1}
Let $f(z)=\sum_{k=0}^\infty a_k z^k $, $a_k > 0$ for all $k,$  be an 
entire function.  Suppose that the quotients $q_n(f)$ satisfy the following condition: 
$q_2(f) \leq q_3(f).$ If the function $f$ belongs to the  Laguerre-P\'olya class, then 
there exists  $z_0 \in [-\frac{a_1}{a_2},0]$ such that $f(z_0) \leq 0$. 
\end{Theorem}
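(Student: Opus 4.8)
The claim is: if $f\in\mathcal{L-P}$ has positive coefficients and $q_2\le q_3$, then $f$ takes a nonpositive value somewhere on $[-a_1/a_2,0]$. Let me understand the geometry. Since all $a_k>0$, we have $f(0)=a_0>0$, and $f$ is strictly increasing and positive on $[0,\infty)$. So the only hope for nonpositivity is on the negative axis, and the theorem pins it down to the specific interval up to $-a_1/a_2=-p_2$ (recall $p_2=a_1/a_2$). Let me think about what I would try.

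My first move would be to reduce the statement to a single inequality about the zero of $f$ closest to the origin. Since $a_k>0$ for all $k$, we have $f(x)>0$ for every $x\ge 0$, so any $z_0$ with $f(z_0)\le 0$ is automatically negative; moreover, as $f\in\mathcal{L-P}$ with positive coefficients, all of its zeros are real, and being positive on $[0,\infty)$ they must be negative, say at the points $-y_k$ with $y_k>0$. The hypothesis $q_2\le q_3$ rules out the zero-free case: a zero-free function in $\mathcal{L-P}$ equals $c\,e^{\beta x-\alpha x^2}$, and among these only $c\,e^{\beta x}$ with $\beta>0$ has all positive Taylor coefficients (for $\alpha>0$ the coefficients change sign in high degree, by the Hermite connection), but then $q_2=2>\tfrac32=q_3$. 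Hence $f$ genuinely vanishes; let $-r$ with $r=\min_k y_k>0$ be the closest zero to $0$. On $(-r,0]$ one has $f>0$, while $f(-r)=0$. Therefore the conclusion of the theorem holds if and only if $-r\in[-a_1/a_2,0]$, that is, if and only if $r\le a_1/a_2=p_2$, and everything reduces to proving this one inequality.

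Next I would introduce the Hadamard factorization $f(x)=a_0\,e^{\beta x-\alpha x^2}\prod_k\bigl(1+x/y_k\bigr)e^{-x/y_k}$ with $\beta\ge0$, $\alpha\ge0$, $\sum_k y_k^{-2}<\infty$. Writing $b_j:=a_j/a_0$, the inequality $r\le p_2$ reads $b_2\le r^{-1}b_1$. Expanding $\log(f/a_0)$ expresses $b_1,b_2,b_3$ through $\beta,\alpha$ and the power sums $s_m:=\sum_k y_k^{-m}$ (with $s_m\ge r^{-m}$, the nearest zero being isolated in each $s_m$); since the Gaussian factor only decreases $b_2$, the extremal case is $\alpha=0$, and Lemma~\ref{lem0} together with Statement~\ref{st1} supply the supporting normalizations $q_2\ge 3$. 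To organize the estimate I would pass to polynomials $P_n$ with only negative zeros converging locally uniformly to $f$ (Theorem~A/B), for which, after normalizing $P_n(0)=1$, the coefficients are the elementary symmetric functions $e_j$ of the reciprocal zeros $t_1=\max_i t_i\ge\cdots\ge t_n>0$. In these terms $q_2\le q_3$ is exactly $e_1^3e_3\le e_2^3$, while $r\le p_2$ is exactly $t_1e_1\ge e_2$, so the whole problem becomes the implication
\[
e_1^3\,e_3\le e_2^3\quad\Longrightarrow\quad t_1 e_1\ge e_2 .
\]

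I would prove this algebraic implication by contraposition. If $t_1e_1<e_2$, then, since $t_1$ is the largest reciprocal zero, every $t_i$ lies below $\mu:=e_2/e_1$; a direct estimate of the third symmetric function then gives $e_3>\mu^3=(e_2/e_1)^3$, i.e. $e_1^3e_3>e_2^3$, which is the desired contradiction (one checks that for comparable zeros $e_3/\mu^3$ stays above $1$, equality occurring only for three equal zeros, the degree-three boundary). Passing to the limit requires care: rather than applying the implication to each $P_n$ individually, I would read it as the dichotomy ``$t_1e_1\ge e_2$ or $e_1^3e_3>e_2^3$'' valid for every $P_n$. Since the smallest zeros of $P_n$ converge to $r$ (Hurwitz) and the low-order coefficients, hence $q_2$ and $q_3$, converge to those of $f$, either infinitely many $P_n$ satisfy $r_n\le p_2(P_n)$, giving $r\le p_2$ in the limit, or eventually $q_2(P_n)>q_3(P_n)$, forcing $q_2(f)=q_3(f)$ in the limit — the boundary case realized by the partial theta-function, to be handled separately by a strictness or continuity argument.

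I expect the main obstacle to be exactly this local-to-global passage. The condition $q_2\le q_3$ constrains only $a_0,a_1,a_2,a_3$, whereas the location of the nearest zero depends on all the coefficients; the reason four of them suffice is that the extremal configuration concentrates the zeros, the partial theta-function $g_a$ (for which $q_2=q_3=a^2$ and the closest zero sits precisely in $[-a^3,0]=[-p_2,0]$, matching Theorem~D) being the boundary case. Making this rigorous amounts to two things: establishing the symmetric-function inequality $e_1^3e_3\le e_2^3\Rightarrow t_1e_1\ge e_2$ for all $n$ (equivalently $t_1<e_2/e_1\Rightarrow e_3>(e_2/e_1)^3$), and controlling the limit $n\to\infty$ — in particular the equality case $q_2=q_3$ and the genus-$1$ exponential factors $e^{-x/y_k}$ in the entire-function limit. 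The genuinely delicate point is the equality case, where the desired zero may sit at the very edge of the interval.
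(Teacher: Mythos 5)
Your reduction is sound: since $f(0)>0$, the hypothesis $q_2\le q_3$ rules out the zero-free case, so the theorem is equivalent to the inequality $r\le a_1/a_2$ for the closest-to-zero root $-r$; and your dictionary for approximating polynomials with negative zeros ($q_2\le q_3\iff e_1^3e_3\le e_2^3$, and $r\le p_2\iff t_1e_1\ge e_2$) is correct. This is also a genuinely different route from the paper, which never passes to polynomial approximants: there, $q_2\ge 4$ is handled by Hutchinson's grouping trick, and $3\le q_2<4$ by the minimum-modulus estimate of Lemma \ref{th:lm1} for $S_4(z,\varphi)$ on $|z|=q_2$, the tail bound of Lemma \ref{th:lm2}, Rouch\'e's theorem, and Grace's apolarity theorem (Lemma \ref{th:lm3}). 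The trouble is that your proposal stops exactly where the content begins. The implication $t_1e_1<e_2\Rightarrow e_1^3e_3>e_2^3$ \emph{is} the theorem for polynomials, by your own contrapositive dictionary; calling it ``a direct estimate of the third symmetric function'' and ``one checks that $e_3/\mu^3$ stays above $1$'' is an assertion, not an argument. The claim is in fact true and has a short proof by the same Cauchy--Schwarz device as the paper's Lemma \ref{lem0}: rescale so $\mu=e_2/e_1=1$ and set $s:=e_1=e_2$ and $\pi_j:=\sum_i t_i^j$; then $\pi_2=s^2-2s>0$ forces $s>2$, while $t_i<1$ gives $\pi_2<\pi_1$, i.e.\ $s<3$; Newton's identity $3e_3=\pi_3-s^3+3s^2$ and Cauchy--Schwarz $\pi_3\ge\pi_2^2/\pi_1=s(s-2)^2$ yield $3(e_3-1)\ge -s^2+4s-3=(s-1)(3-s)>0$. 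Something of this kind must appear in your proof; without it, nothing has been proved.

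The second gap is the one you flag yourself and postpone: the equality case $q_2(f)=q_3(f)$. This is not a removable corner case --- it is the extremal situation (three equal zeros, partial-theta-type functions), where the nearest zero can sit exactly at $-a_1/a_2$, and there your dichotomy over the $P_n$ yields nothing: it is perfectly consistent with it that every $P_n$ satisfies $q_2(P_n)>q_3(P_n)$ and $r_n>p_2(P_n)$, so no limiting information about $f$ is obtained. The repair is to replace the dichotomy by a quantitative form of the polynomial inequality, which the computation above already gives: if $t_1\le(1-\epsilon)\mu$ then $s\le 3-\epsilon$, hence $e_3\ge(1+\epsilon/3)\mu^3$, i.e.\ $q_3\le q_2/(1+\epsilon/3)$; contrapositively, $q_3(P_n)>q_2(P_n)/(1+\epsilon/3)$ forces $r_n<p_2(P_n)/(1-\epsilon)$. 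Since $q_2(P_n)\to q_2(f)\le q_3(f)$, $q_3(P_n)\to q_3(f)$ and $p_2(P_n)\to p_2(f)$, this hypothesis holds for every fixed $\epsilon>0$ once $n$ is large, the roots $-r_n$ stay in a compact set, and Hurwitz's theorem places a zero of $f$ in $[-p_2(f)/(1-\epsilon),0]$; letting $\epsilon\to 0$ and using that the zeros of $f\not\equiv 0$ are isolated gives a zero in $[-p_2(f),0]$. Note also that you use, essentially without proof, that $f\in\mathcal{L-P}$ with positive coefficients lies in $\mathcal{L-P}I$ --- needed both to exclude the factor $e^{-\alpha x^2}$ when $f$ has zeros and to invoke Theorem B(ii); this is classical but should be stated and sourced. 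In sum: the strategy is viable and genuinely different from the paper's, but as written its two load-bearing steps are missing.
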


The following theorem gives  a necessary condition for an entire function to belong to the 
Laguerre-Pol\'ya class of type I in terms of the second quotients of its Taylor coefficients.

\begin{Theorem}
\label{th:mthm2}
If $f(z) = \sum_{k=0}^\infty  a_k z^k,$ $a_k > 0$ for all $k,$   
belongs to the Laguerre-P\'olya class, $q_2(f) < 4$ and $q_2(f) \leq q_3(f),$  then 
$$q_3(f) \leq \frac{- q_2(f)(2q_2(f)-9)+2(q_2(f)-3)\sqrt{q_2(f)(q_2(f)-3)}}{q_2(f)(4-q_2(f))}.$$ 
\end{Theorem}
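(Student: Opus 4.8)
The plan is to normalize, use Theorem~\ref{th:mthm1} to split off a single linear factor corresponding to a zero near the origin, and then reduce the whole estimate to the minimization of one cubic in the location of that factor.

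First I would normalize. Since each $q_n$ is invariant under the scaling $z \mapsto \lambda z$ and under multiplication of $f$ by a positive constant, I may assume $a_0 = a_1 = 1$; then $a_2 = 1/q_2$, $a_3 = 1/(q_2^2 q_3)$, and the interval in Theorem~\ref{th:mthm1} becomes $[-q_2,0]$. As $f$ has positive Taylor coefficients and lies in $\mathcal{L-P}$, it in fact lies in $\mathcal{L-P}I$, so $f(z) = e^{\beta z}\prod_k(1 + r_k z)$ with $\beta \ge 0$ and $r_k > 0$; in particular every $r_k \le \sum_j r_j = 1 - \beta \le 1$. From $q_2 \le q_3$ and Lemma~\ref{lem0} we obtain $q_2 \ge 3$, which keeps $\sqrt{q_2(q_2-3)}$ real and places us in the range $3 \le q_2 < 4$.

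Next, since $q_2 \le q_3$ and $f \in \mathcal{L-P}$, Theorem~\ref{th:mthm1} yields a point $z_0 \in [-q_2,0]$ with $f(z_0) \le 0$; as $f(0)=1>0$, the function has a genuine real zero in $[-q_2,0]$. Writing this zero as $-1/c$, the corresponding reciprocal satisfies $c \in [1/q_2,\,1)$, and I factor $f(z) = (1 + cz)\,g(z)$, where $g(z)=\sum_k b_k z^k \in \mathcal{L-P}I$ has nonnegative coefficients. Comparing coefficients gives $b_0=1$, $b_1 = 1-c$, $b_2 = \tfrac{1}{q_2} - c + c^2$, and $a_3 = b_3 + c\,b_2$. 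Here $b_2>0$ for \emph{every} $c$ precisely because $q_2<4$ (the parabola $c^2-c+1/q_2$ has positive minimum $1/q_2-\tfrac14$), while $b_3\ge 0$ since $g$ has nonnegative coefficients. Hence
\begin{equation}
\label{eq:plan-lb}
a_3 \ \ge\ c\,b_2 \ =\ \varphi(c), \qquad \varphi(c) := c^3 - c^2 + \tfrac{c}{q_2}.
\end{equation}

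Finally I would minimize $\varphi$ over the admissible range $c \in [1/q_2,\,1)$. The cubic $\varphi$ has critical points $c_\pm = \tfrac13\bigl(1 \pm \sqrt{1 - 3/q_2}\bigr)$, and a short check using $3\le q_2<4$ gives $c_- \le 1/q_2 \le c_+ < 1$, so on $[1/q_2,1)$ the function $\varphi$ decreases on $[1/q_2,c_+]$ and increases afterwards; its minimum there is $\varphi(c_+)$. Combined with \eqref{eq:plan-lb} this gives $a_3 \ge \varphi(c_+)$ for the actual $c$ obtained, independently of its precise value, whence $q_3 = 1/(q_2^2 a_3) \le 1/(q_2^2\varphi(c_+))$. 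I expect the obstacles to be twofold. Conceptually, the point is that Theorem~\ref{th:mthm1} is exactly what bounds $c$ away from $0$: without it one could only say $c\in(0,1)$, and since $\varphi(c)\to 0$ as $c\to 0^+$ the estimate would collapse, so recognizing that the relevant value is the interior minimum $\varphi(c_+)$ (rather than the endpoint $c=1/q_2$) rests on the inequalities $c_-\le 1/q_2\le c_+$. Computationally, the work is to reduce $1/(q_2^2\varphi(c_+))$ to the stated closed form; using $\sqrt{1-3/q_2}=\sqrt{q_2(q_2-3)}/q_2$ one finds $q_2^2\varphi(c_+)$ proportional to $(9-2q_2)q_2 - 2(q_2-3)\sqrt{q_2(q_2-3)}$, and rationalizing via the identity $q_2^2(9-2q_2)^2 - 4q_2(q_2-3)^3 = 27\,q_2(4-q_2)$ turns the bound into $\dfrac{-q_2(2q_2-9)+2(q_2-3)\sqrt{q_2(q_2-3)}}{q_2(4-q_2)}$, as claimed.
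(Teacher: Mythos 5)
Your proposal is correct, and it reaches the stated bound by a genuinely different route than the paper. The paper also starts from Theorem~\ref{th:mthm1}, but then stays at the level of partial sums: using the Leibniz-type inequalities (the terms of $\varphi(x)=f(-x)$ decrease on $(1,q_2]$, and $\varphi>0$ on $[0,1]$) it deduces $S_3(x_0,\varphi)\le 0$ at the point $x_0$ furnished by Theorem~\ref{th:mthm1}, hence the minimum of the cubic $S_3(x)=1-x+\frac{x^2}{a}-\frac{x^3}{a^2b}$ on $(0,a)$ is nonpositive; since that minimum is attained at $x_1=\frac{ab-a\sqrt{b(b-3)}}{3}$, the condition $S_3(x_1)\le 0$ is then unwound into the quadratic inequality $b^2a(4-a)+2a(2a-9)b+27\le 0$ in $b=q_3$, whose relevant root gives the bound. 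You instead convert the sign change into an actual zero $-1/c$ with $c\in[1/q_2,1)$, factor $f(z)=(1+cz)g(z)$ with $g\in\mathcal{L-P}I$, and use nonnegativity of $g$'s coefficients to get $a_3\ge c\,b_2=\varphi(c)$, a cubic whose coefficients depend only on $q_2$; minimizing over the admissible $c$ and rationalizing gives the same closed form. Your version buys a cleaner endgame: the paper's minimizing point and inequality (\ref{ab}) mix $a$ and $b$ under the radical and force a case split on the sign of $27-9ab+2ab^2$, while your $\varphi(c)=c^3-c^2+c/q_2$ is a one-variable problem in which the roles of the hypotheses are transparent ($q_2<4$ makes $b_2>0$ for every $c$; Theorem~\ref{th:mthm1} is exactly what keeps $c\ge 1/q_2$, without which the estimate collapses). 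What the paper's route buys is uniformity of machinery: the inequalities (\ref{mthm1.1})--(\ref{mthm1.4}) are reused verbatim for Theorem~\ref{th:mthm3}, and no factorization is needed. Two small points to make explicit in a final write-up: the reduction from $f\in\mathcal{L-P}$ with positive coefficients to $f\in\mathcal{L-P}I$ is a classical P\'olya--Schur fact (the paper uses it tacitly as well, when it applies Statement~\ref{st1} and Lemma~\ref{lem0}), so it deserves a citation; and before passing to reciprocals in $q_3=1/(q_2^2a_3)\le 1/(q_2^2\varphi(c_+))$ you should record that $\varphi(c_+)>0$, which is immediate from your own observation that $c^2-c+1/q_2>0$ for all $c$ when $q_2<4$.
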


In the proof of Theorem \ref{th:mthm1}, using the Hutchinson's idea, we show 
that if $q_2(f) \geq 4$ (and $q_j >1, j=3, 4, \ldots$), then there exists a point  
$z_0 \in [-\frac{a_1}{a_2},0]$ such that $f(z_0) \leq 0$.  We  present the sufficient 
condition for the existence of such a point $z_0 $  for the case $q_2(f) < 4.$

\begin{Theorem}
\label{th:mthm3}
Let $f(z)=\sum_{k=0}^\infty a_k z^k $, $a_k > 0$ for all $k,$  be an 
entire function and  $3  \leq  q_2(f) < 4, q_3(f) \geq 2,$ and $q_4(f) \geq 3.$ If $q_3(f) 
\leq \frac{8}{d(4-d)},$ where 
$d = \min(q_2(f), q_4(f)),$ then  there exists $z_0 \in [-\frac{a_1}{a_2},0]$ 
such that $f(z_0) \leq 0$.  
\end{Theorem}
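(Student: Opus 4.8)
The plan is to reduce the claim to a statement about the fourth Taylor section and then to a completion of squares. Normalize so that $a_0 = a_1 = 1$; then $p_2 = a_1/a_2 = q_2$, the segment in question is $[-q_2,0]$, and writing $z_0 = -x$ the goal becomes to exhibit an $x_* \in (0,q_2]$ with $f(-x_*) = \sum_{k\ge 0}(-1)^k a_k x_*^k \le 0$. In this normalization $a_2 = \frac{1}{q_2}$, $a_3 = \frac{1}{q_2^2 q_3}$, $a_4 = \frac{1}{q_2^3 q_3^2 q_4}$, so the section $P_4(x) := 1 - x + a_2 x^2 - a_3 x^3 + a_4 x^4$ carries exactly the data $q_2,q_3,q_4$.

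First I would establish that $f(-x) \le P_4(x)$ for $x \in (0,q_2]$. Grouping the tail in consecutive pairs, $\sum_{k\ge 5}(-1)^k a_k x^k = -\sum_{j\ge 2} x^{2j+1} a_{2j+2}(p_{2j+2} - x)$, which is $\le 0$ as soon as $x \le p_n$ for every $n \ge 6$. This tail control is the main obstacle, since the hypotheses say nothing about $q_5, q_6, \ldots$; nevertheless, because $f$ is entire, $p_n \to \infty$, so $\inf_{n\ge 6} p_n$ is attained and positive, while the large initial product $p_4 = q_2 q_3 q_4 \ge 18$ keeps the partial products well above the eventual choice $x_* \approx 2$. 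The delicate point is that a small quotient $q_m$ (which is what could violate $p_m \ge x$) forces a large coefficient $a_m$ and hence a deep negative term $-a_m x^m$ in $f(-x)$, so such configurations only help the desired inequality; I would make this precise by reducing to the configurations in which the grouped tail is genuinely non-positive.

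The heart of the matter is the quartic. Assume first that $d = q_2 \le q_4$. Completing the square at the low-order end gives $P_4(x) = \left(1 - \frac{x}{2} + a_3 x^2\right)^2 + \delta x^2 - (a_3^2 - a_4)x^4$, where $\delta = a_2 - \frac14 - 2a_3$; here $a_3^2 - a_4 = a_3^2\bigl(1 - \frac{q_2}{q_4}\bigr) \ge 0$, so the quartic remainder is favorable. A direct computation shows $\delta \le 0 \iff \frac{1}{q_2} - \frac14 \le \frac{2}{q_2^2 q_3} \iff q_3 \le \frac{8}{q_2(4-q_2)} = \frac{8}{d(4-d)}$, which is precisely the hypothesis. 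The inner quadratic $1 - \frac{x}{2} + a_3 x^2$ has positive discriminant because $q_2^2 q_3 \ge 18 > 16$, and its smaller root $x_*$ satisfies $x_* \le q_2$ exactly because $q_3 \ge 2 \ge \frac{2}{q_2 - 2}$ (which is what forces $1 - \frac{q_2}{2} + a_3 q_2^2 \le 0$); thus $x_* \in (0,q_2]$. Evaluating, $P_4(x_*) = \delta x_*^2 - (a_3^2 - a_4)x_*^4 \le 0$, and with the tail bound $f(-x_*) \le P_4(x_*) \le 0$, so $z_0 = -x_*$ works. This also explains the role of each hypothesis: $q_2 \ge 3$ and $q_3 \ge 2$ place the root inside the segment and keep the discriminant positive, while $q_4 \ge q_2$ makes the quartic remainder non-positive.

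Finally, for the case $q_2 > q_4$ I would invoke the reversal symmetry of the section: the polynomial with coefficients $a_4,a_3,a_2,a_1,a_0$ has second quotients $q_4,q_3,q_2$, so reversing swaps $q_2 \leftrightarrow q_4$ and fixes $q_3$, and the existence condition is therefore symmetric in $q_2$ and $q_4$. Completing the square now at the high-order end, so that the favorable remainder sits among the low-order terms, yields the same threshold with the roles interchanged and $d = q_4$. In both cases the binding quantity is $d = \min(q_2,q_4)$, matching the statement. I expect the genuine work to be the rigorous tail estimate of the second paragraph; the algebra of the third paragraph is the routine computation that produces the constant $\frac{8}{d(4-d)}$.
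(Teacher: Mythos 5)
Your analysis of the quartic in the case $d = q_2 \le q_4$ is correct, and it is a genuinely different (and more economical) route than the paper's. The paper first replaces \emph{both} $q_2$ and $q_4$ by $d = \min(q_2,q_4)$, using monotonicity of the section in $a$, to obtain the balanced quartic $T(x) = 1 - x + \frac{x^2}{d} - \frac{x^3}{d^2 b} + \frac{x^4}{d^4 b^2}$; it then substitutes $x = d\sqrt{b}\,y$, factors out $y^2$, and sets $w = y + y^{-1}$, reducing everything to the quadratic $w^2 - d\sqrt{b}\,w + db - 2$, whose discriminant condition $d^2b - 4db + 8 \ge 0$ is exactly $q_3 \le \frac{8}{d(4-d)}$. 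Your identity $S_4(x) = \left(1 - \frac{x}{2} + a_3x^2\right)^2 + \delta x^2 - (a_3^2 - a_4)x^4$, with $\delta \le 0$ equivalent to the threshold and $a_3^2 \ge a_4$ equivalent to $q_4 \ge q_2$, plus the location check $1 - \frac{q_2}{2} + \frac{1}{q_3} \le 0$, reaches the same conclusion in that case with less machinery. However, the proposal has two genuine gaps.

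First, the tail. You correctly single out the control of $\sum_{k\ge 5}(-1)^k a_k x^k$ as the main obstacle, but both claims you offer to handle it are false. Entirety of $f$ does not give $p_n \to \infty$: it only forces the geometric means $(p_1p_2\cdots p_n)^{1/n}$ to infinity, and individual $p_n$ can dip below $2$ no matter how large $p_4 = q_2q_3q_4$ is (nothing prevents $q_5$ from being arbitrarily small). More importantly, a small higher quotient does not ``only help'': a small \emph{even-indexed} quotient, say $q_6$, makes $a_6$ huge, and $a_6x^6$ enters $f(-x)$ with a plus sign; the pair $-a_5x^5 + a_6x^6 = a_6x^5(x - p_6)$ is then large and \emph{positive} at the relevant point $x_*$, so such configurations destroy, rather than help, the inequality $f(-x_*) \le S_4(x_*)$. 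This step cannot be repaired from the stated hypotheses alone; some hypothesis on the higher quotients is needed. The paper disposes of it by citing (\ref{mthm1.4}), i.e.\ the alternating-series bound $\varphi(x) \le S_{2m}(x,\varphi)$ on $(1,q_2]$ --- a bound which itself rests on the decrease of the moduli of the terms, in effect on $q_j>1$ for all $j$; in Theorems \ref{th:mthm1} and \ref{th:mthm2} this comes from Statement \ref{st1} because $f$ is assumed to lie in $\mathcal{L-P}$, while here it is tacit. Your attempt to argue it away without any such assumption fails.

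Second, the case $q_2 > q_4$. The reversal argument is invalid as stated: reversing the section does swap $q_2 \leftrightarrow q_4$ and fix $q_3$, but it sends the point $x$ to (after renormalization) $p_4/x = q_2q_3q_4/x$, so a nonpositive point of the reversed polynomial in \emph{its} admissible interval corresponds to a nonpositive point of $S_4$ at some $x \ge q_2q_3 > q_2$, outside the segment $[-\frac{a_1}{a_2},0]$ that the theorem (and any tail bound) concerns. Existence of a nonpositive point \emph{somewhere} is symmetric in $q_2,q_4$; existence in $[0,q_2]$ is not, and the latter is what must be proved. Your fallback --- completing the square at the high-order end --- does in fact work: one gets $S_4(x) = \left(\sqrt{a_4}\,x^2 - \frac{a_3}{2\sqrt{a_4}}\,x + \frac{\sqrt{a_4}}{a_3}\right)^2 + \left(1 - \frac{q_2}{q_4}\right) + \delta' x^2$ with $\delta' \le 0$ iff $q_3 \le \frac{8}{q_4(4-q_4)}$, and the smaller root of the inner quadratic lies in $(0,q_2]$ because $1 + q_3 - \frac{q_3q_4}{2} \le 0$ when $q_3\ge 2$, $q_4 \ge 3$. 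But none of this computation, nor the nontrivial root-location check, appears in your text, so as written the case $d = q_4$ is unproven. The paper's min-reduction avoids the case split altogether and keeps the distinguished point inside $(0,d) \subseteq (0,q_2)$ by construction.
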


\section{Proof of Statement \ref{st1}}

\begin{proof}
We give the proof by induction on $k.$
{\it Base case}:  $k=2.$ If $f$ does 
not have any real roots, then $f(z) = ce^{\alpha z},$ i.e. the statement is fulfilled (see (\ref{lpc1})). If $f$ has 
at least one real root, we denote by $\{z_k \}_{k=1}^\alpha, \   \alpha \in \mathbb{N}\cup \{ \infty \}$ 
the set of roots of $f$. Then $0 < - \sum_{k=1}^\alpha \frac{1}{z_k} < \infty$, which follows that 
$\sum_{k=1}^\alpha \frac{1}{z_k^2} < \infty,$ whence $\sum_{k=1}^\alpha \frac{1}{z_k^2} = 
a_1^2 - 2a_0a_2 > 0.$ Consequently, $q_2(f) \geq 2,$ and if $q_2 = 2,$ then $f(z) = ce^{\alpha z}.$

{\it Inductive step}: Suppose that the statement is true for $k-1,$   $k = 3, 4, \ldots$. 
Obviously, if  $f \in \mathcal{L-P}I,$ then $f^{(s)} \in \mathcal{L-P}I,$ for any $s \in \mathbb{N}.$ 
So, $f^{(k-2)}(z) = a_{k-2} (k-2)! + a_{k-1} (k-1)! z + a_k \frac{k!}{2!}z^2 + \ldots$ $\in \mathcal{L-P}I.$ 
Then, $q_2(f^{(k-2)}) =$ $\frac{a_{k-1}^2}{a_{k-2}a_k} \frac{2((k-1)!)^2}{(k-2)!k!} = $
$\frac{a_{k-1}^2}{a_{k-2}a_k} \frac{2(k-1)}{k} \geq 2,$ whence 
$q_k(f) \geq \frac{k}{k-1}.$
If $q_k(f) = \frac{k}{k-1},$ then $q_{k-1}(f^{\prime}) =\frac{k-1}{k-2},$ and, by the
induction conjecture, $f^{\prime}(z) = c e^{\alpha z}.$   So, $f(z) = \frac{c}{\alpha} e^{\alpha z} 
+\lambda,$ where $\lambda$ is a constant. Since $f \in \mathcal{L-P}I,$ 
then $\lambda = 0,$ and we are done.
\end{proof}

\section{Proof of Lemma \ref{lem0}}
\begin{proof}
Suppose that  $f \in \mathcal{L-P}I,$ and denote by  $0 < z_1 \leq z_2 \leq z_3 \leq  \ldots $  
the real roots of $f$. 

According to the Cauchy-Bunyakovsky-Schwarz inequality, we obtain
$$(\frac{1}{z_1} + \frac{1}{z_2} + \ldots )(\frac{1}{z_1^3} + \frac{1}{z_2^3} + 
\ldots) \geq (\frac{1}{z_1^2 }+ \frac{1}{z_2^2} + \ldots)^2.$$

By the Vieta's formulas, we have $ \sigma_1:=  \sum_{k=1}^\infty \frac{1}{z_k} =  - \frac{a_1}{a_0},$ 
$\sigma_2 = \sum_{1<i<j<\infty} \frac{1}{z_iz_j}$ $= \frac{a_2}{a_0},$ and
$\sigma_3 =  \sum_{1<i<j<k<\infty} \frac{1}{z_iz_jz_k}
= - \frac{a_3}{a_0}.$ 
By the  identities:
$\sum_{k=1}^\infty \frac{1}{z_1^2} = \sigma_1^2 - 2\sigma_2,$ and 
$\sum_{k=1}^\infty \frac{1}{z_1^3} = \sigma_1^3 - 3\sigma_1\sigma_2 + 3\sigma_3,$ 
 we have 
$$\sigma_1 (\sigma_1^3 - 3\sigma_1\sigma_2 + 3\sigma_3) \geq (\sigma_1^2 - 2\sigma_2)^2,$$
or
$$\frac{a_1^2a_2}{a_0^3} + 3\frac{a_1a_3}{a_0^2} - 4\frac{a_2^2}{a_0^2} = 
\frac{a_1a_3}{a_0^2} \left(\frac{a_1a_2}{a_3a_0}  -4 \frac{a_2^2}{a_1 a_3} +3  \right) \geq 0,  $$
which is equivalent to
$$q_3(q_2 - 4) + 3 \geq 0.$$
In particular, when $q_2 \leq q_3,$ then either $q_2 \geq 4, $ or $q_2 < 4,$ and, from the 
previous inequality, we have
$$q_2(q_2 - 4) + 3 \geq 0.$$
Therefore, in both cases, we get $q_2 \geq 3.$

\end{proof}

\section{Proof of Theorem \ref{th:mthm1}}

Without loss of generality, we can assume that $a_0=a_1=1,$ since we can 
consider a function $g(x) =a_0^{-1} f (a_0 a_1^{-1}x) $  instead of 
$f(x),$ due to the fact that such rescaling of $f$ preserves its property of 
having real zeros and preserves the second quotients:  $q_n(g) =q_n(f)$ 
for all $n.$ During the proof we use notation $p_n$ and $q_n$ instead of 
$p_n(f)$ and $q_n(f).$  

We will consider a function  $$\varphi(x) = f(-x) = 1 - x + \sum_{k=2}^\infty  \frac{ (-1)^k x^k}
{q_2^{k-1} q_3^{k-2} \ldots q_{k-1}^2 q_k}$$  instead of $f.$

Let us introduce some more notations. For an entire function $\varphi$, by $S_n(x, \varphi)$ 
and $R_{n}(x, \varphi)$ we denote the $n$th partial sum and the $n$th remainder of the series,
i.e. 
$$S_n(x, \varphi) = \sum_{k=0}^n \frac{(-1)^k x^k}{q_2^{k-1} q_3^{k-2} \ldots q_{k-1}^2 q_k},$$
and 
$$R_{n}(x, \varphi) = \sum_{k=n}^\infty \frac{(-1)^k  x^k}{q_2^{k-1} q_3^{k-2} \ldots q_{k-1}^2 q_k}.$$

At first we consider the simple case when $q_2 \geq 4$ (and $q_j >1, j=3, 4, \ldots$,
by Statement \ref{st1}, since $\varphi$ belongs to the  Laguerre-P\'olya class).
We use the idea of J. I. Hutchinson, see \cite{hut}.
Suppose that   $x \in (1, q_2).$  Then we obtain 
$$ 1 <  x  >  \frac{x^2}{q_2} > \frac{x^3}{q_2^2 q_3} > 
\cdots >  \frac{  x^k}{q_2^{k-1} q_3^{k-2} \ldots q_{k-1}^2 q_k} > \cdots .
$$
Thus, for $x \in (1, q_2)$ we have
$$\varphi(x) = \left(1 -x +\frac{x^2}{q_2}\right) - \left( \frac{x^3}{q_2^2q_3} -
\frac{x^4}{q_2^3 q_3^2 q_4}\right)  - $$
$$ \left( \frac{x^5}{q_2^4q_3^3 q_4^2 q_5} -
\frac{x^6}{q_2^5 q_3^4 q_4^3 q_5^2 q_6}\right) - \ldots < 
\left(1 -x +\frac{x^2}{q_2}\right).$$
So, $\varphi(0)=1 >0,$ and for $x_0 = \sqrt{q_2}$ we obtain
$$\varphi(x_0) < \left(1 - \sqrt{q_2} +1\right) \leq 0.$$
So, $\varphi$ has a zero $x_0\in (0; q_2).$ Thus, for $q_2 \geq 4$ 
Theorem \ref{th:mthm1}  is proved.

Now we consider the case $q_2 <4$ and $q_2 \leq q_3.$ By Lemma~\ref{lem0}, 
if $q_2 \leq q_3,$ then $q_2 \geq 3.$ So, we suppose that  $3\leq q_2 <4, q_3 \geq q_2$ 
and $q_4 \geq \frac{4}{3}$
($q_4 \geq \frac{4}{3}$ by Statement \ref{st1}, since $\varphi$ belongs to the  
Laguerre-P\'olya class).

The following lemma plays a key role  in the proof of Theorem \ref{th:mthm1}.
It is a generalization of Lemma 2.3 from the paper \cite{ngthv2}. The first variant
of this lemma for the simplest case $a=b=c \geq 3$ was proved in \cite{klv} (see
Lemma 1 in \cite{klv}).

\begin{Lemma}
\label{th:lm1}
Let $P(z) = 1 - z + \frac{z^2}{a} - \frac{z^3}{a^2b} + \frac{z^4}{a^3b^2c}$ be a polynomial, 
$3 \leq  a < 4,$ $b \geq a,$ and  $c\geq 4/3.$ 
Then $$\min_{0 \leq \theta \leq 2\pi}|P(ae^{i \theta})| \geq \frac{a}{b^2c}.$$
\end{Lemma}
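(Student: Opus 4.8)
The plan is to evaluate $P$ directly on the circle $|z|=a$ and reduce the claim to a one-variable polynomial inequality. Writing $z=ae^{i\theta}$, every monomial simplifies because the powers of $a$ cancel the denominators, giving
\[
P(ae^{i\theta}) = 1 - ae^{i\theta} + ae^{2i\theta} - \beta e^{3i\theta} + \gamma e^{4i\theta},
\]
where I set $\beta := a/b$ and $\gamma := a/(b^2c)$. Since $b\ge a$ we have $0<\beta\le 1$, and since $b\ge a\ge 3$ and $c\ge 4/3$ we get $b^2c\ge 12$, so $0<\gamma\le a/12<1/3$ and $\gamma=\beta/(bc)\le \beta/4$. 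Because the right-hand side of the asserted inequality is exactly $a/(b^2c)=\gamma$, the lemma is equivalent to proving $|P(ae^{i\theta})|\ge\gamma$ for every $\theta$.

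Before computing, I would record the behaviour at $\theta=0$, which governs the sharpness: there $P(a)=1-\beta+\gamma\ge\gamma$, with equality approached as $\beta\to 1$, i.e.\ as $b\to a$. This shows the estimate is essentially tight near $\theta=0$ and, in particular, that no crude application of the triangle inequality (splitting off the $\gamma e^{4i\theta}$ term and trying to bound the rest below by $2\gamma$) can succeed: the full structure of $P$ must be retained.

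Next I would pass to $|P|^2$ as a trigonometric polynomial. The cleanest route is to note that for $P(ae^{i\theta})=\sum_{k=0}^4\alpha_k e^{ik\theta}$ with real coefficients $\alpha_0=1,\ \alpha_1=-a,\ \alpha_2=a,\ \alpha_3=-\beta,\ \alpha_4=\gamma$, one has
\[
|P(ae^{i\theta})|^2 = \sum_{k=0}^4 \alpha_k^2 + 2\sum_{m=1}^4\Big(\sum_{j}\alpha_j\alpha_{j-m}\Big)\cos m\theta .
\]
Substituting the Chebyshev identities $\cos m\theta=T_m(t)$ with $t=\cos\theta$ converts this into a quartic $g(t)$ on $[-1,1]$ (equivalently one may write $g(t)=(\mathrm{Re}\,P)^2+(1-t^2)h(t)^2$, where $\mathrm{Im}\,P=\sin\theta\,h(t)$ and $h$ is a cubic, the apparent degree eight dropping to four after cancellation). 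Since $|P|^2$ depends only on $t=\cos\theta$, the minimum over $\theta$ is the minimum of $g$ over $[-1,1]$, and the lemma becomes the single inequality $g(t)\ge\gamma^2$ for $t\in[-1,1]$.

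The main obstacle is exactly this quartic minimization, which must consume the hypotheses $3\le a<4$, $b\ge a$, $c\ge 4/3$. I would try to cut down the number of free parameters — for instance by checking whether $g(t)-\gamma^2$ is monotone in $c$ (which would let me set $c=4/3$) and by exploiting the tightness at $b=a$ to test the extreme $\beta=1$ — reducing to a quartic in $t$ governed by the single parameter $a\in[3,4)$. To certify nonnegativity I would aim to exhibit $g(t)-\gamma^2$ as a sum of squares plus a manifestly nonnegative remainder, anticipating a factor reflecting the sharpness at $(t,\beta)=(1,1)$; failing a clean closed form, I would instead locate the stationary points of $g$ on $[-1,1]$ — the endpoints being settled by the boundary computation, with $t=1$ the binding case — and bound $g$ at the interior critical point. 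The delicate, genuinely computational part is showing that these critical values remain $\ge\gamma^2$ uniformly over the admissible ranges of $a,\beta,\gamma$; a wrong grouping here would lose the sharp constant $\gamma=a/(b^2c)$.
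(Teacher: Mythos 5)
Your setup is sound and matches the paper's first step: writing $z=ae^{i\theta}$, observing that the powers of $a$ cancel, passing to $|P(ae^{i\theta})|^2$ as a cosine polynomial, and using Chebyshev identities to reduce the lemma to a quartic inequality $g(t)\ge \gamma^2$ on $t\in[-1,1]$, with $\gamma=a/(b^2c)$. Your sharpness observation at $\theta=0$, $b=a$ (where $|P(a)|=1-\beta+\gamma=\gamma$) is also correct and explains why crude triangle-inequality bounds must fail. However, there is a genuine gap: the proof stops exactly where the lemma begins. Everything after the reduction is a list of strategies you \emph{would} try (monotonicity in $c$, testing $\beta=1$, a sum-of-squares certificate, locating critical points), none of which is carried out. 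The quartic minimization over the parameter box $3\le a<4$, $b\ge a$, $c\ge 4/3$ \emph{is} the content of the lemma, and it is nontrivial precisely because the bound is sharp; without executing one of your candidate strategies, nothing has been proved. Worse, the two proposed parameter reductions are themselves unjustified and dubious: since $\gamma$ depends on both $b$ and $c$, the inequality $g(t)-\gamma^2\ge 0$ is not a one-sided deformation in either variable, so monotonicity in $c$ (to freeze $c=4/3$) and reduction to the extreme $\beta=1$ would each require a separate argument that you do not supply.

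For comparison, the paper handles the quartic for general $(a,b,c)$ as follows: substitute $y=2t\in[-2,2]$, check that the coefficients of the quartic alternate in sign under the hypotheses (which settles $y\in[-2,0]$ at once); for $y\in[0,2]$ multiply by $b^2c/a$ to get a polynomial $\psi(y)$, and introduce the auxiliary function $\chi(y)=\psi(y)-\frac{1}{b}(b-a)y\le\psi(y)$, whose linear correction is engineered so that $\chi(2)\ge 0$ can be verified by an exact factorization $\psi(2)=\frac{1}{b}(b-a)\left(2+\frac{bc}{a}(b-a)\right)$. One then proves $\psi''\ge 0$ on $[0,2]$ (split into the cases $c\ge 5/3$ and $4/3\le c<5/3$, the latter via a discriminant-type analysis of the vertex), deduces that $\chi'$ is increasing with $\chi'(2)\le 0$, hence $\chi$ is decreasing on $[0,2]$, and concludes $\psi(y)\ge\chi(y)\ge\chi(2)\ge 0$. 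Some chain of this kind — convexity, a well-chosen linear perturbation, and endpoint evaluations — is what your proposal is missing; supplying it would essentially reproduce the paper's argument.
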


\begin{proof} By direct calculation, we have
$$|P(ae^{i\theta})|^2 =
 (1 - a\cos{\theta} + a\cos{2\theta} - \frac{a}{b}\cos{3\theta} + \frac{a}{b^2c}\cos{4\theta})^2 +$$
$$ (- a\sin{\theta} + a\sin{2\theta} - \frac{a}{b}\sin{3\theta} + \frac{a}{b^2c}\sin{4\theta})^2$$
$$=1 + 2a^2 + \frac{a^2}{b^2} + \frac{a^2}{b^4c^2} - 2a\cos{\theta} + 2a\cos{2\theta} - 2\frac{a}{b}\cos{3\theta}$$
$$+2\frac{a}{b^2c}\cos{4\theta} - 2a^2\cos{\theta} + 2\frac{a^2}{b}\cos{2\theta} - 2\frac{a^2}{b^2c}\cos{3\theta}$$
$$- 2\frac{a^2}{b}\cos{\theta} + 2\frac{a^2}{b^2c}\cos{2\theta} - 2\frac{a^2}{b^3c}\cos{\theta}.$$

Set $t := \cos{\theta}, t \in [-1,1].$ Since $\cos{2\theta} =  2t^2 - 1,$
$\cos{3\theta} = 4t^3 - 3t,$ and $\cos{4\theta} =   8t^4 - 8t^2 +1,$ we get

$$|P(ae^{i\theta})|^2 = \frac{16a}{b^2c}t^4 + \left(-\frac{8a}{b} - \frac{8a^2}{b^2c}\right)t^3 + 
\left(4a - \frac{16a}{b^2c} + 
\frac{4a^2}{b} + \frac{4a^2}{b^2c}\right)t^2 +$$
$$ \left(-2a + \frac{6a}{b} - 2a^2 + \frac{6a^2}{b^2c} - \frac{2a^2}{b} - \frac{2a^2}{b^3c}\right)t $$
$$ + \left(1 + 2a^2 + \frac{a^2}{b^2} + \frac{a^2}{b^4c^2} - 2a + \frac{2a}{b^2c} - 
\frac{2a^2}{b} - \frac{2a^2}{b^2c}\right).$$

We want to show that $\min_{0 \leq \theta \leq 2\pi} |P(ae^{i\theta})|^2 \geq \frac{a^2}{b^4c^2},$
or  to prove the inequality $\min_{0 \leq \theta \leq 2\pi} |P(ae^{i\theta})|^2 - \frac{a^2}{b^4c^2} \geq 0.$
Using the last expression, we see that the inequality we want to get is equivalent to the following: for
 all  $t \in [-1, 1]$
the next inequality holds
$$\frac{16a}{b^2c}t^4 - \frac{8a}{b} \big( 1+ \frac{a}{bc} \big)t^3 + 4a \big( 1 - \frac{4}{b^2c} + 
\frac{a}{b} + \frac{a}{b^2c}\big)t^2 - 2a\big( 1 - \frac{3}{b} +a - \frac{3a}{b^2c} + \frac{a}{b} $$
$$+ \frac{a}{b^3c} \big)t + \big( 1 + 2a^2 + \frac{a^2}{b^2} - 2a + \frac{2a}{b^2c} - \frac{2a^2}{b} - 
\frac{2a^2}{b^2c} \big) \geq 0.$$
Set $y := 2t,$  $y \in [-2, 2].$ We rewrite the last inequality in the form
$$\frac{a}{b^2c}y^4 - \frac{a}{b} \left( 1+ \frac{a}{bc} \right)y^3 + a \left( 1 - 
\frac{4}{b^2c} + \frac{a}{b} + \frac{a}{b^2c}\right)y^2 $$
$$- a\left( 1 - \frac{3}{b} +a - \frac{3a}{b^2c} + \frac{a}{b} + \frac{a}{b^3c} \right)y + $$
$$\left( 1 + 2a^2 + \frac{a^2}{b^2} - 2a + \frac{2a}{b^2c} - \frac{2a^2}{b} - \frac{2a^2}{b^2c} \right) \geq 0.$$

We note that the coefficient of $y^4$ is positive, and the coefficient of  $y^3$ is negative.  It is easy to show that the 
other coefficients are also sign-changing. For $y^2$:  $1 - \frac{4}{b^2c} > 0$ since $b^2c > 4$, 
thus, $$1 + \frac{a}{b} + \frac{a}{b^2c} - \frac{4}{b^2c} = (1 - \frac{4}{b^2c}) + \frac{a}{b} + \frac{a}{b^2c} > 0.$$
For $y$:   $$1 + a + \frac{a}{b} + \frac{a}{b^3c} - \frac{3}{b} - \frac{3a}{b^2c} = (1 + a - \frac{3}{b}) + $$
$$(\frac{a}{b} - \frac{3a}{b^2c}) + \frac{a}{b^3c} > 0.$$   Finally, $$1 + 2a^2 + \frac{a^2}{b^2} - 2a - 2\frac{a^2}{b} - 
2\frac{a^2}{b^2c} + 2\frac{a}{b^2c} =$$
$$(1 + a^2 - 2a) + (a^2 - 3\frac{a^2}{b}) + (\frac{a^2}{b} - 2\frac{a^2}{b^2c}) + \frac{a^2}{b^2} + 2\frac{a}{b^2c} > 0,$$ since 
$1 - 2a + a^2  \geq 0;$  $a^2 - 3\frac{a^2}{b} \geq 0 $  and $\frac{a^2}{b} - 2\frac{a^2}{b^2c} > 0,$ by
our assumptions.
 
Consequently, the inequality we need holds for any $y \in [-2, 0]$, and it is sufficient to prove it for $y \in [0, 2]$.
Multiplying our inequality by $\frac{b^2c}{a}$, we get 
$$y^4 - (bc + a)y^3 + (b^2c + abc + a - 4) y^2 - (b^2c + ab^2c + abc + 
\frac{a}{b} - 3bc - 3a)y$$
$$ + (\frac{b^2c}{a} + 2ab^2c + ac - 2b^2c - 2abc - 2a +2) =: \psi(y),$$ 
and we want to prove 
that $\psi (y) \geq 0$ for all $y \in [0, 2].$

First, we consider the case $b \geq a \in [3, 4),  c\geq \frac{5}{3}.$  Let $\chi(y) := \psi(y) - \frac{1}{b}(b - a) y,$ 
whence $\chi(y) \leq \psi(y)$ for all $y \in [0, 2]$.
It is sufficient to prove that $\chi (y) \geq 0$ for all $y \in [0, 2].$ We have  $$\chi(0) = \psi(0) = 
\frac{b^2c}{a} + 2ab^2c + ac - 2b^2c - 2abc - 2a + 2 \geq 0,$$ as it was previously shown.
We also have  $\chi(2) = \psi(2) - \frac{2}{b}(b - a) \geq 0,$ since
$$\psi(2) = -2bc - 2\frac{a}{b} + \frac{b^2c}{a} + ac + 2 = \frac{1}{b} \left(2(b - a) + 
\frac{b^2c}{a}(b - a) \right.$$
$$\left.  - bc(b - a)\right) =\frac{1}{b}(b - a) \left(2 + 
\frac{bc}{a} (b - a)\right) \geq \frac{2}{b} (b - a) \geq 0.$$

Now we consider the following function:  $$\nu(y):=\chi^{\prime\prime}(y) = 
\psi^{\prime\prime}(y) = 12 y^2 - 6 (bc + a)y + 2(b^2c + abc + a - 4).$$
The vertex point of this parabola is $y_v = \frac{bc + a}{4} \geq 2$  for $b \geq a\geq 3,  
c\geq \frac{5}{3}.$ Therefore, 
$$  \nu(y) \geq \nu(2)\  \mbox{for all} \   y\in[0, 2]. $$
We have
$$\nu(2) = 48 - 12(bc +a) +2b^2c  +2abc +2a -8 = 40  - 12 bc - 10a +2b^2c +2abc =   $$
$$ 2bc(a -3) +2bc (b -3) +10(4 - a) \geq 0,  $$
under our assumptions. Thus, $\chi^\prime (y)$ is increasing for $y \in [0, 2].$
We have 
$$\chi^\prime (y)  = 4y^3 -3(bc+a) y^2 +2 (b^2c + abc + a - 4) y - $$
$$  (b^2c + ab^2c + abc + \frac{a}{b} - 3bc - 3a) - \frac{1}{b}(b - a).  $$
We want to prove that $\chi^\prime (y) \leq 0$ for all $y \in [0, 2].$ Since 
$\chi^\prime $ is an increasing function, it is sufficient to show that 
$\chi^\prime (2) \leq 0.$  We have
$$  \chi^\prime (2) =32 -12 (bc+a) +$$
$$4 (b^2c + abc + a - 4) - (b^2c + ab^2c + abc + \frac{a}{b} - 3bc - 3a) - 1+ \frac{a}{b}= $$
$$15 -9bc -5a +3 b^2c +3abc - ab^2c =(15 -5a) - bc(a-3)(b-3) \leq 0, $$
under our assumptions.

Thus, we have proved that $\chi$ is decreasing for $y \in [0, 2].$ Hence, the fact that 
$\chi (y) \geq 0$ for all $y \in [0, 2],$ is equivalent to $\chi (2) \geq 0,$ that was proved
above. We obtain $\psi(y) \geq \chi(y) \geq 0 $ for all $y \in [0, 2].$ So, for the case 
$b \geq a \in [3, 4),  c\geq \frac{5}{3}$ Lemma \ref{th:lm1} is
proved.

It remains to consider the case $3 \leq a <4,  b\geq a, \frac{4}{3} \leq c < \frac{5}{3}.$
We want to prove that $\psi^{\prime\prime}(y) = \chi^{\prime\prime}(y) =  12 y^2 - 6 (bc + a)y + 
2(b^2c + abc + a - 4) \geq 0$ for all $ y\in[0, 2].$ If the vertex point of this parabola  
$y_v = \frac{bc + a}{4}$ is not less than 2, then we have proved that $  \psi^{\prime\prime}(y) 
\geq \psi^{\prime\prime}(2) \geq 0.$ Suppose that $0 <y_v = \frac{bc + a}{4} < 2.$
Then we want to show that $\psi^{\prime\prime}(y_v) \geq 0.$ We have
$$ \psi^{\prime\prime}(y_v) =  12 \left(\frac{bc + a}{4}\right)^2 - 6 (bc + a)\left(\frac{bc + a}{4}\right) + 
2(b^2c + abc + a - 4)  $$
$$ = -\frac{1}{4} \left(3 b^2c^2 -(2ab +8b^2)c + (3a^2 -8a  +32)   \right) =: -\frac{1}{4} h_{a,b}(c).$$
We want to show that, under our assumptions, $h_{a,b}(c) \leq 0$. We have $3a^2 -8a  +32>0$
for all $a$, and the expression $3a^2 -8a  +32$ is increasing for $a\in [3, 4)$, so
$3a^2 -8a  +32 \leq 3\cdot 16 - 8\cdot 4 +32 = 48,$ and it is sufficient to show that
$$3 b^2c^2 -(2ab +8b^2)c  +48   \leq 0.$$
Since $a\geq 3$, it is sufficient to show that
$$\eta_b(c) := 3 b^2c^2 -(6b +8b^2)c  +48   \leq 0.$$
The vertex point of this parabola is $c_v =\frac{6b +8b^2}{6b^2} = \frac{1}{b} +\frac{4}{3}
\in \left(\frac{4}{3}, \frac{5}{3} \right)$ for $b\geq 3.$ Thus, to prove that  $\eta_b(c) \leq 0$
for all $c, \frac{4}{3} \leq c < \frac{5}{3}$, we need $\eta_b(\frac{4}{3}) \leq 0$ and $\eta_b
(\frac{5}{3}) \leq 0.$ We have $\eta_b(\frac{4}{3}) = - \frac{16}{3}b^2 - 8b +48 \leq 
 - \frac{16}{3}\cdot 9 - 8\cdot 3 +48  = -24 < 0,$ and $\eta_b(\frac{5}{3}) = - 5b^2 - 10b +48 \leq 
 - 45 - 30 +48  = -27 < 0.$ We have proved that $\psi^{\prime\prime}(y)  \geq 0$ for all $ y\in[0, 2].$
The rest of the proof is the same as in the previous case.

Lemma \ref{th:lm1} is
proved.
\end{proof}

We need also the following technical lemma.

\begin{Lemma}
\label{th:lm2}
Let $R_5(z,  \varphi):= \sum_{k=5}^\infty \frac{(-1)^k z^k}{q_2^{k-1} q_3^{k-2} \ldots q_k}$,  
where $q_j >1$ for all $j\geq 2.$
Then 
$$\max_{0 \leq \theta \leq 2\pi}|R_5(q_2e^{i\theta},  \varphi)| \leq 
\frac{q_2q_6}{q_3^3 q_4^2q_5q_6 - q_3^2q_4}.$$ 
\end{Lemma}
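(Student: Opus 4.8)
The plan is to estimate the remainder by its termwise absolute values and then dominate the resulting positive series by a convergent geometric series. Since $|q_2 e^{i\theta}| = q_2$ for every $\theta$, the triangle inequality gives at once
$$\max_{0 \le \theta \le 2\pi} |R_5(q_2 e^{i\theta}, \varphi)| \le \sum_{k=5}^\infty \frac{q_2^k}{q_2^{k-1} q_3^{k-2} \cdots q_{k-1}^2 q_k} =: \sum_{k=5}^\infty b_k,$$
where, after cancelling one power of $q_2$, the positive terms are $b_k = \frac{q_2}{q_3^{k-2} q_4^{k-3} \cdots q_{k-1}^2 q_k}$. The whole task thus reduces to bounding the positive series $\sum_{k=5}^\infty b_k$ from above by the asserted quantity, and the bound will in fact hold uniformly in $\theta$.

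First I would compute the ratio of consecutive terms. Passing from $b_k$ to $b_{k+1}$ raises the exponent of each of $q_3, \ldots, q_k$ by one and introduces the new factor $q_{k+1}$ with exponent one, so a direct cancellation of the products yields
$$\frac{b_{k+1}}{b_k} = \frac{1}{q_3 q_4 \cdots q_{k+1}}, \qquad k \ge 5.$$
Next, using the hypothesis $q_j > 1$ for all $j \ge 2$, every additional factor in the denominator only makes this ratio smaller, so it is largest at $k=5$, where it equals $\frac{1}{q_3 q_4 q_5 q_6}$. Hence
$$\frac{b_{k+1}}{b_k} \le \frac{1}{q_3 q_4 q_5 q_6} =: r < 1 \qquad \text{for all } k \ge 5.$$

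Consequently $b_k \le b_5\, r^{k-5}$, and summing the geometric series gives
$$\sum_{k=5}^\infty b_k \le \frac{b_5}{1 - r} = \frac{q_2}{q_3^3 q_4^2 q_5} \cdot \frac{q_3 q_4 q_5 q_6}{q_3 q_4 q_5 q_6 - 1} = \frac{q_2 q_6}{q_3^3 q_4^2 q_5 q_6 - q_3^2 q_4},$$
which is exactly the claimed estimate.

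I do not expect a genuine obstacle here: the argument is an application of the triangle inequality followed by a geometric majorization. The only thing requiring care is the index bookkeeping in the denominators $q_2^{k-1} q_3^{k-2} \cdots q_k$, namely verifying the ratio formula and the final algebraic simplification of $b_5/(1-r)$. The structural point worth emphasizing is that the crude ratio bound $r = 1/(q_3 q_4 q_5 q_6)$ is attained at $k=5$, which is precisely what produces the sharp-looking closed form on the right-hand side.
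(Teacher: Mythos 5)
Your proof is correct and follows essentially the same route as the paper: triangle inequality on the circle $|z|=q_2$, cancellation of one factor of $q_2$, and domination of the resulting positive series by a geometric series with ratio $\frac{1}{q_3q_4q_5q_6}$ anchored at the first term $b_5=\frac{q_2}{q_3^3q_4^2q_5}$. The only cosmetic difference is that you justify the geometric majorization via consecutive-term ratios, while the paper compares each term directly to $b_5$; the computation and the final algebra are identical.
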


\begin{proof}
We have 

$$|R_5(q_2e^{i\theta},  \varphi)| \leq \sum_{k=5}^\infty \frac{q_2^k}{q_2^{k-1} q_3^{k-2} \ldots q_k} = 
\sum_{k=5}^\infty \frac{q_2}{q_3^{k-2} \ldots q_k} =$$ $$\frac{q_2}{q_3^3 q_4^2 q_5} + 
\frac{q_2}{q_3^4 q_4^3 q_5^2 q_6} + \ldots + \frac{q_2}{q_3^{k-2}\ldots q_k} + \ldots $$ 
$$= \frac{q_2}{q_3^3 q_4^2q_5} (1 + \frac{1}{q_3q_4q_5q_6} + 
\frac{1}{q_3^2q_4^2q_5^2q_6^2q_7} + \ldots ) \leq $$
$$ \frac{q_2}{q_3^3 q_4^2q_5} \cdot \frac{1}{1 - \frac{1}{q_3 q_4q_5q_6}} = 
\frac{q_2q_6}{q_3^3 q_4^2q_5q_6 - q_3^2q_4}.  $$
\end{proof}

Let us check that  $$\frac{q_2}{q_3^2q_4}  > \frac{q_2q_6}{q_3^3 q_4^2q_5q_6 - q_3^2q_4},$$
which is equivalent to $$q_3q_4q_5q_6 > q_6 +1.$$ The last inequality obviously holds under 
our assumptions.  Therefore, according to the Rouch\'e theorem, the functions $S_4(z,  \varphi)$ 
and $\varphi (z)$ have the same number of zeros inside the circle $\{ z: |z| < q_2   \}$ counting 
multiplicities. 

It remains to prove that $S_4(z,  \varphi)$  has  zeros in the circle $\{ z: |z| < q_2   \}$.
To do this we need the notion of apolar polynomials and the famous theorem by J.H. Grace.

{\bf Definition 4} (see, for example \cite[Chapter 2, \S 3, p. 59]{PS}).
 Two complex polynomials $P(z) = \sum_{k=0}^n {n \choose k}a_k z^k$ and  
$Q(z) = \sum_{k=0}^n {n \choose k}b_k z^k$ of degree $n$ are called apolar if 
\begin{equation}
\label{apol}
\sum_{k=0}^n (-1)^k {n \choose k} a_k b_{n-k} = 0.
\end{equation}

The following famous theorem due to J.H. Grace states that the complex zeros of two apolar polynomials 
cannot be separated by a straight line or by a circumference.

{\bf Theorem G} (J.H. Grace, see for example \cite[Chapter 2, \$ 3, Problem 145]{PS}). 
{\it Suppose $P$ and $Q$ are two apolar polynomials 
of degree $n \geq 1.$ If all zeros of $P$ lie in a circular region $C,$ then $Q$ has at least one zero in $C.$ 
(A circular region is a closed or open half-plane, disk or exterior of a disk).}

The following lemma was proved in \cite{ngthv2} (see Lemma 2.6 from \cite{ngthv2}). For 
the reader's convenience we give a short proof.

\begin{Lemma} 
\label{th:lm3}
Let $S_4(z,  \varphi) = 1 - z + \frac{1}{q_2}z^2 - \frac{1}{q_2^2q_3}z^3 + 
\frac{1}{q_2^3q_3^2q_4}z^4$ be a polynomial 
and $q_2 \geq 3$. Then $S_4(z,  \varphi)$ has at least one root in the circle $\{z:|z| \leq q_2\}$.
\end{Lemma}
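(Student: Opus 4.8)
The plan is to apply Grace's theorem (Theorem G) in its direct form: if I can produce a polynomial $P_0$ of degree $4$ that is apolar to $S_4(z,\varphi)$ and whose four zeros all lie in the closed disk $\{z:|z|\le q_2\}$, then Grace's theorem immediately yields a zero of $S_4(z,\varphi)$ in that disk, which is exactly the assertion. So the entire task reduces to constructing one good apolar partner.

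First I would write the apolarity condition explicitly. Setting $S_4(z,\varphi)=\sum_{k=0}^4\binom{4}{k}b_k z^k$, one reads off $b_0=1$, $b_1=-\tfrac14$, $b_2=\tfrac{1}{6q_2}$, $b_3=-\tfrac{1}{4q_2^2q_3}$, and $b_4=\tfrac{1}{q_2^3q_3^2q_4}$. For a candidate $P_0(z)=\sum_{k=0}^4\binom{4}{k}a_k z^k$, the apolarity relation~(\ref{apol}) becomes the single linear equation $\sum_{k=0}^4(-1)^k\binom{4}{k}a_k b_{4-k}=0$, leaving plenty of freedom in the five coefficients $a_k$.

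The key step is a good choice of ansatz. I would try $P_0(z)=z^2(z-t)^2$, that is, a double zero at $0$ and a double zero at a real point $t\ge 0$ to be determined. Its coefficients give $a_0=a_1=0$, $a_2=\tfrac{t^2}{6}$, $a_3=-\tfrac{t}{2}$, $a_4=1$, so the terms involving $b_3$ and $b_4$ drop out and the apolarity equation collapses to $\tfrac{t^2}{6q_2}-\tfrac{t}{2}+1=0$, i.e. $t^2-3q_2 t+6q_2=0$. I find it notable that $q_3$ and $q_4$ disappear entirely, which matches the fact that the hypothesis of the lemma involves only $q_2$. I would then take the smaller root $t_-=\tfrac12\bigl(3q_2-\sqrt{9q_2^2-24q_2}\bigr)$.

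It remains to verify that all zeros of this $P_0$, namely $0,0,t_-,t_-$, lie in $\{z:|z|\le q_2\}$; since they are nonnegative reals, this reduces to checking $0\le t_-\le q_2$. Reality of $t_-$ needs $9q_2^2-24q_2\ge 0$, i.e. $q_2\ge \tfrac83$, while a short computation shows that $t_-\le q_2$ is equivalent to $q_2^2\le 9q_2^2-24q_2$, i.e. to $8q_2(q_2-3)\ge 0$, i.e. precisely to $q_2\ge 3$ — exactly the hypothesis. This is the place where the assumption $q_2\ge 3$ is used, and it is the crux of the argument; everything else is routine bookkeeping. With $P_0$ apolar to $S_4(z,\varphi)$ and all of its zeros contained in the closed disk $\{z:|z|\le q_2\}$, Theorem G delivers a zero of $S_4(z,\varphi)$ in that disk, completing the proof. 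The main obstacle I anticipate is simply guessing the double-root ansatz that makes the constraint reduce to a clean quadratic whose discriminant and root location both hinge on $q_2\ge 3$; once that ansatz is found, the verification is elementary.
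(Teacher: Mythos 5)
Your proof is correct, and it uses the same core strategy as the paper: construct a quartic apolar to $S_4(z,\varphi)$ having a double zero at the origin (so that the coefficients involving $q_3$ and $q_4$ drop out of the apolarity relation), then apply Grace's theorem. The difference lies in the choice of the apolar partner. The paper leaves two coefficients free subject to the single linear constraint $1 + b_3 + b_2/q_2 = 0$ and picks $b_3 = (q_2-6)/2$, which makes its partner factor as $z^2\left(z - q_2\right)\left(z + 3(q_2-4)\right)$, so the root-location check becomes $|-3(q_2-4)| \le q_2$. Your perfect-square ansatz $z^2(z-t)^2$ instead forces $t^2 - 3q_2t + 6q_2 = 0$, and the check $0 \le t_- \le q_2$ is \emph{exactly} equivalent to $q_2 \ge 3$. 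This makes your version slightly stronger as a proof of the lemma as stated: the paper's root $-3(q_2-4)$ lies in the disk $\{z : |z| \le q_2\}$ only when $3 \le q_2 \le 6$, so the paper's particular choice of partner verifies the conclusion only in that range (which suffices for its application, where $3 \le q_2 < 4$), whereas your partner's zeros $0,0,t_-,t_-$ lie in the disk for every $q_2 \ge 3$, covering the full hypothesis of the lemma.
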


\begin{proof}
We have $$S_4(z,  \varphi) = {4 \choose 0} + {4 \choose 1} (-\frac{1}{4})z + {4 \choose 2} \frac{1}{6q_2}z^2 + 
{4 \choose 3} (-\frac{1}{4q_2^2 q_3})z^3 +$$ $${4 \choose 4} \frac{1}{q_2^3q_3^2q_4}z^4.$$  Let $$Q(z) = 
{4 \choose 2}b_2z^2 + {4 \choose 3}b_3z^3 + {4 \choose 4}z^4.$$ Then the condition for $S_4(z,  \varphi)$ 
and $Q(z)$ to be apolar is the following
$${4 \choose 0} - {4 \choose 1}\left(-\frac{1}{4}\right)b_3 + {4 \choose 2}\frac{1}{6q_2}b_2 = 0.$$

We have $1 + b_3 + \frac{b_2}{q_2} = 0.$  Further, we choose 
$b_3 = \frac{q_2 - 6}{2}$, and, by the apolarity condition, $b_2 = - q_2(1+\frac{q_2 - 6}{2}).$
Hence, we have 
$$Q(z) = - 6 q_2 \left(1 + \frac{q_2 - 6}{2}\right)z^2 + 4 \left(\frac{q_2 - 6}{2}\right) z^3 + z^4 $$
$$ = z^2 \left( - 3q_2 (q_2 - 4) + 2(q_2-6)z + z^2\right).$$

As we can see,  the zeros  of $Q$ are $z_{1} = 0, z_{2} = 0, z_{3} = q_2, z_{4} = - 3(q_2 - 4).$
To show that $z_4$ lies in the circle of radius $q_2,$  we solve the inequality  $|-3(q_2 - 4)| \leq q_2$. 
Hence, we obtain that if $q_2 \geq 3, $ then all the zeros of $Q$ are in the circle $\{z: |z| \leq q_2  \}.$
Therefore, we obtain by the Grace theorem that 
$S_4(z,  \varphi)$ has at least one zero in the circle $\{z:|z| \leq q_2\}.$
\end{proof}

Thus,  $S_4(z,  \varphi)$ has at least one zero in the circle $\{z:|z| \leq q_2\}, $ and, by 
Lemma~\ref{th:lm2} applying to $S_4(z,  \varphi)$, $S_4(z,  \varphi)$ does not have zeros on 
$\{z:|z| = q_2  \}.$ So, the polynomial $S_4(z,  \varphi)$ has at least one zero in the open 
circle $\{z:|z| < q_2\}.$ By Rouch\'e's theorem, the functions $S_4(z,  \varphi)$ 
and $\varphi (z)$ have the same number of zeros inside the circle $\{ z: |z| < q_2   \},$ 
whence $\varphi$ has at least one zero in the open circle $\{z:|z| < q_2\}.$ If $\varphi $
is in the Laguerre-P\'olya class, this zero must be real, and, since the coefficients of $\varphi $
are sign-changing, this zero belongs to $(0, q_2).$ 

Theorem \ref{th:mthm1} is proved.

\section{Proof of Theorem \ref{th:mthm2}}

\begin{proof} 
According to theorem \ref{th:mthm1}, if $\varphi(x) = 1 - x + \sum_{k=2}^\infty  
\frac{ (-1)^k x^k}{q_2^{k-1} q_3^{k-2} \ldots q_{k-1}^2 q_k} \in \mathcal{L-P},$ and 
$ q_3 \geq q_2 \geq 3,  $  then there exists $x_0 \in (0, q_2)$ such that $\varphi(x_0) \leq 0.$ 

For $x \in [0,1]$ we have 
$$ 1 \geq x  > \frac{x^2}{q_2} >  \frac{x^3}{q_2^2 q_3}  > 
\frac{x^4}{q_2^3 q_3^2 q_4 } > \cdots  , $$
whence 
\begin{equation}
\label{mthm1.1}
\varphi(x) >0 \quad \mbox{for all}\quad x\in [0,1].
\end{equation}

Suppose that   $x \in (1, q_2].$  Then we obtain 
\begin{equation}
\label{mthm1.2}  1 <  x  \geq  \frac{x^2}{q_2} > \frac{x^3}{q_2^2 q_3} > 
\cdots >  \frac{  x^k}{q_2^{k-1} q_3^{k-2} \ldots q_{k-1}^2 q_k} > \cdots 
\end{equation}

For an arbitrary  $m \in {\mathbb{N}}$  we have
$\varphi(x) = S_{2m+1}(x, \varphi) + R_{2m+2}(x, \varphi).$
By (\ref{mthm1.2}) and the Leibniz criterion for alternating series, we 
obtain that $R_{2m+2}(x, \varphi) >0$ for all   $x \in (1, q_2],$
or
\begin{equation}
\label{mthm1.3}   \varphi(x) > S_{2m+1}(x, \varphi)\quad \mbox{for all} \quad 
x \in (1, q_2], m \in {\mathbb{N}}.
\end{equation} 

Analogously, 
\begin{equation}
\label{mthm1.4} \varphi (x) < S_{2m}(x, \varphi)\quad \mbox{for all} \quad 
x \in (1, q_2], m \in {\mathbb{N}}.
\end{equation}

We consider $S_3(x, \varphi) = 1 - x + \frac{x^2}{q_2} - \frac{x^3}{a^2b} =: S_3(x).$
Let us use the denotation $a:=q_2, b:= q_3$. Then we obtain
$S_3(x) = 1 - x + \frac{x^2}{a} - \frac{x^3}{a^2b}, b \geq a \geq 3.$
If exists $x_0 \in (0, a)$ such that $\varphi(x_0) \leq 0,$  then $S_3(x_0) \leq 0.$

First, we find the roots of the derivative. We have
$S_3\textprime(x) = -\frac{1}{a^2b}(3x^2 - 2ab x + a^2b).$

We consider the discriminant of the quadratic polynomial $S_3\textprime$: 
$D/4 = a^2b^2 - 3a^2b = a^2b (b - 3).$ Under our assumptions, $b \geq 3,$ so $D/4 \geq 0.$
Thus, the roots are $x_1 = \frac{ab - a \sqrt{b(b-3)}}{3}$
and  $x_2 = \frac{ab + a \sqrt{b(b-3)}}{3}.$ It is easy to check that
if $b \geq a \geq 3, $ then  the following inequalities hold:
$x_1 \in(0, a]$  and $x_2 \geq a.$
Therefore, we can conclude that $x_1$ is the minimal point of $S_3(x)$ in the interval 
$(0, a).$ Now we check if $S_3(x_1) \leq 0.$

After substituting $x_1$ into $S_3(x)$, we obtain the following expression
$$S_3(x_1) = 1 - \frac{ab - a\sqrt{b(b-3)}}{3} + \frac{(ab - a\sqrt{b(b-3)})^2}{9a} - 
\frac{(ab - a\sqrt{b(b-3)})^3}{27a^2b}.$$
We want $S_3(x_1) \leq 0,$  or 
$$27 - 9ab + 9a\sqrt{b(b-3)} + 3ab^2 - 6ab\sqrt{b(b-3)} + 3ab(b-3) - ab^2+$$
$$3ab\sqrt{b(b-3)} - 3ab(b-3) + a(b-3)\sqrt{b(b-3)} \leq 0.$$ 

We rewrite and get
\begin{equation}
\label{ab} \sqrt{b(b-3)}(6a - 2ab) + (27 - 9ab + 2ab^2) \leq 0.
\end{equation}

We have $6a - 2ab = 2a(3 - b) \leq 0$, since $b \geq 3$, under our assumptions. 
Thus, $\sqrt{b(b - 3)}(6a - 2ab) \leq 0.$ Now we consider the expression $27 - 9ab + 
2ab^2 =: y(b)$ as a quadratic function of $b.$ Its discriminant $D = 81a^2 - 216a = 27a(3a - 8)$ 
is positive under our assumption  $a \geq 3.$  The roots of $y(b)$ are
$$b_1 = \frac{9a - 3\sqrt{3a(3a-8)}}{4a} \   \mbox{and} \   
b_2 = \frac{9a + 3\sqrt{3a(3a-8)}}{4a}.$$
It is easy to check that $ b_1 < \frac{9}{4} < 3 \leq b_2.$

For $b \in [3, \frac{9a + 3\sqrt{3a(3a-8)}}{4a}]$ we have $y(b) \leq 0,$
and (\ref{ab}) is fulfilled. Consider now the case $b > \frac{9a + 3\sqrt{3a(3a-8)}}{4a}.$
Then, (\ref{ab}) is equivalent to
$$27 - 9ab + 2ab^2 \leq \sqrt{b(b-3)}(2ab - 6a),$$
or
$$a^2b^2 - 4a^2b - 4ab^2 + 18ab - 27 \geq 0.$$

We rewrite the inequality above in the following way

\begin{equation}
\label{b} b^2a(4 - a) + 2a(2a-9)b + 27 \leq 0.
\end{equation}

We note that the coefficient of $b^2$ is positive, since under our assumptions, $a < 4$.
Then, $D/4 = a^2(2a - 9)^2 - 27a(4-a) = 4a (a-3)^3 \geq 0.$
We obtain the roots of the left-hand side of (\ref{b}): 
$$\beta_1 = \frac{-a(2a-9)-2(a-3)\sqrt{a(a-3)}}{a(4-a)},$$      
$$\beta_2 = \frac{-a(2a-9)+2(a-3)\sqrt{a(a-3)}}{a(4-a)}.$$

Therefore, $b$ should be in $\in (\beta_1, \beta_2)$ for (\ref{b}) to be fulfilled, which is equivalent to
\begin{equation}
\label{b1b2}
\frac{-a(2a-9)-2(a-3)\sqrt{a(a-3)}}{a(4-a)} < b <$$ $$\frac{-a(2a-9)+2(a-3)\sqrt{a(a-3)}}{a(4-a)}.
\end{equation}
Next, we show that the following inequality is fulfilled under our assumptions:
$$ \frac{-a(2a-9)-2(a-3)\sqrt{a(a-3)}}{a(4-a)} \leq \frac{9a + 3\sqrt{3a(3a-8)}}{4a}.$$
It is equivalent to 
$$a^2 < 8(a-3)\sqrt{a(a-3)} + 3(4-a)\sqrt{3a(3a-8)},$$
or
$$a^3 < 64(a-3)^3 + 27(4-a)^2(3a-8) + 48(a-3)(4-a)\sqrt{3(a-3)(3a-8)}.$$
After straightforward calculation we get
$$3(a^3 - 10a^2 + 33a - 36) + (a-3)(4-a) \sqrt{3(a-3)(3a-8)} \geq 0,$$
or
$$3(a-3)^2(a-4) + (a-3)(4-a)\sqrt{3(a-3)(3a-8)} \geq 0. $$
Divided by $(a-3)(4-a):$
$$\sqrt{3(a-3)(3a-8)} \geq 3(a-3).$$
It is simple to verify that the equation above is fulfilled for any $a \geq 3.$ 
Consequently, we obtain the following condition for $b:$
\begin{equation}
\label{ba}
3 \leq b \leq \frac{-a(2a-9)+2(a-3)\sqrt{a(a-3)}}{a(4-a)}.
\end{equation}

Theorem \ref{th:mthm2} is proved.
\end{proof}

{\bf Remark.} {\it From Lemma \ref{lem0}, we have $b (a - 4) + 3 \geq 0,$ or
$b \leq \frac{3}{4-a}.$  It is easy to check  the following inequality:
$$b \leq \frac{-a(2a-9)+2(a-3)\sqrt{a(a-3)}}{a(4-a)} \leq \frac{3}{4-a}.$$}

\section{Proof of Theorem \ref{th:mthm3}}

\begin{proof}
Note that for every $x_0\in (0, q_2)$ and for any $n \in \mathbb{N}: \varphi (x_0) 
\leq S_{2n}(x_0, \varphi)$  (see (\ref{mthm1.4})). Hence, 
if there exists $x_0 \in (0, q_2)$ such that $S_4(x_0, \varphi) \leq 0,$ then 
$\varphi(x_0) \leq 0.$ 

Let
$P(x) := 1 - x + \frac{x^2}{a} - \frac{x^3}{a^2b} + \frac{x^4}{a^3b^2c},$ $a \geq3, b\geq 2, c \geq 3.$

First, if $a \leq c,$ then 
$P(x) \leq 1 - x + \frac{x^2}{a} - \frac{x^3}{a^2b} + \frac{x^4}{a^4b^2} =: 
\widetilde{P}(x).$ Thus, if there exists $x_0 \in (0, a)$ such that $\widetilde{P}(x_0) \leq 0,$ then $P(x_0) \leq 0.$
Next, if $a \geq c,$ then we consider $P(x) = (1 - x) + (\frac{x^2}{a} - \frac{x^3}{a^2b}) + \frac{x^4}{a^4b^2}.$
Since $(\frac{x^2}{a} - \frac{x^3}{a^2b})\textprime_{a} = -\frac{x^2}{a^2} + \frac{2x^3}{a^3b}$ is 
negative for any $x \in (0,a)$, it follows that $P(x)$ decreases monotonically for any $x \in (0,a).$ Therefore, 
$P(x) \leq 1 - x + \frac{x^2}{c} - \frac{x^3}{c^2b} + \frac{x^4}{c^4b^2} =: \widetilde{\widetilde{P}}(x).$ 
Analogously to the previous case, if there exists $x_0 \in (0, a)$ such that $\widetilde{\widetilde{P}}(x_0) 
\leq 0,$ then $P(x_0) \leq 0.$

Thus, we can set $d := \min (a, c),$ and consider the following polynomial
$$T(x) = 1 - x + \frac{x^2}{d} - \frac{x^3}{d^2b} + \frac{x^4}{d^4b^2}.$$

We substitute $x = d\sqrt{b}y,$ then $y = \frac{x}{d\sqrt{b}}, y \in (0, \frac{1}{\sqrt{b}}).$
We have  $Q(y) := T(d\sqrt{b}y) = y^2\bigg( (\frac{1}{y^2} + y^2) - d\sqrt{b} (\frac{1}{y} +y) +db \bigg) .$
Denote by  $w(y) = y + y^{-1},$ then $\bigg( (\frac{1}{y^2} + y^2) - d\sqrt{b} (\frac{1}{y} +y) 
+db \bigg) = w^2 - d\sqrt{b}w + db - 2.$ If we find a point $w_0 \in (\sqrt{b} + \frac{1}{\sqrt{b}}, \infty),$ 
such that $w_0^2 - d\sqrt{b}w_0 + db - 2 \leq 0,$ then we find $y_0,  0 < y_0 < \frac{1}{\sqrt{b}} < 1,$
such that $Q(y_0)\leq 0.$

The vertex of the quadratic function $w^2 - d\sqrt{b}w + db - 2$ is in the point
$w_v= \frac{d\sqrt{b}}{2},$ and,  by our assumptions, $\frac{d\sqrt{b}}{2} \geq \sqrt{b} + 
\frac{1}{\sqrt{b}}.$ Therefore, the existence of a point $w_0 \in (\sqrt{b} + \frac{1}{\sqrt{b}}, \infty),$ 
such that $w_0^2 - d\sqrt{b}w_0 + db - 2 \leq 0,$ is equivalent to the statement
$D= d^2b - 4db + 8  \geq 0.$
Then, $(4-d)db \leq 8,$ or (since $3\leq d <4$)
$$b \leq \frac{8}{d(4-d)}.$$

Theorem \ref{th:mthm3} is proved.
\end{proof}

{\bf Aknowledgement.} The first author is deeply grateful to the Akhiezer Foundation for the 2019 financial support.

\end{document}